\newcommand{\MN}[1]{{\color{red}{\noindent #1}}}
\newcommand{\grpone}{1}
\theoremstyle{plain}
\newtheorem{theorem}{Theorem}[section]
\newtheorem{question}[theorem]{Question}
\newtheorem{lemma}[theorem]{Lemma}
\declaretheorem[title=Conjecture,numberlike=theorem]{conjecture}
\theoremstyle{remark}
\newtheorem{remark}[theorem]{Remark}
\newtheorem{example}[theorem]{Example}
\def\Z{\mathbb Z}
\author{Martin Nitsche}
\address{Martin Nitsche, TU Dresden, Germany}
\email{martin.nitsche@tu-dresden.de}
\author{Andreas Thom}
\address{Andreas Thom, TU Dresden, Germany}
\email{andreas.thom@tu-dresden.de}
\title{Universal solvability of group equations}
\begin{document}

\onehalfspace

\begin{abstract}
We give a new criterion for solvability of group equations, providing proofs of various generalizations of the Kervaire--Laudenbach Conjecture for Connes-embeddable groups. 
\end{abstract}

\maketitle

\section{Introduction}
Let $G$ be a group and let $w_1, w_2, \dots,w_k \in \mathbb F_n \ast G$, the free product of $G$ with the free group on $n$ letters.
We want to study the question under what conditions $w_1,\dots,w_k$, considered as equations with $n$ variables and constants from $G$, can be solved simultaneously in a group containing $G$, or, equivalently, when the natural homomorphism $G\to (\mathbb{F}_n\ast G)/\langle\!\langle w_1,\dots,w_k\rangle\!\rangle$ is injective.
In this article we are interested in conditions that depend only on $\varepsilon(w_1),\dots,\varepsilon(w_k)$, the images under the augmentation homomorphism $\varepsilon \colon \mathbb F_n \ast G \to \mathbb F_n$ that sends $G$ to the neutral element. Following \cite{klyachkothom2017new}, we call this data the {\it content} of the system of equations. If such a condition is met, we say that the \emph{universal} system of group equations with content $\varepsilon(w_1),\dots,\varepsilon(w_k)$ is solvable over $G$.
Questions about the solvability of group equations have a long history going back to \cite{neumann}, see also \cite{klyachkothom2017new} for more background and references. In the case of one variable and one equation $w \in \mathbb Z \ast G$, the famous Kervaire--Laudenbach Conjecture predicts that $w$ can be solved in a group containing $G$ if $\varepsilon(w) \neq 0 \in \mathbb Z$. 
This has been generalized by Klyachko and the second author as follows:

\begin{conjecture}[Generalized Kervaire--Laudenbach Conjecture, \cite{klyachkothom2017new}]\label{conj:generalized-KLC}
Let $G$ be any group and $w \in \mathbb F_n \ast G$.
If $\varepsilon(w)\neq \grpone\in \mathbb{F}_n$, i.e.\ the content of the equation is non-trivial, then the single equation $w(x_1,\ldots,x_n)=\grpone$ with $n$ variables and constants from $G$ can be solved in a group $H$ containing $G$. Moreover, if $G$ is finite, then $H$  can be taken to be finite.
\end{conjecture}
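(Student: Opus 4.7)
The plan is to attack the conjecture analytically, via approximate solutions in finite-dimensional unitary groups followed by an ultraproduct construction. Under the additional hypothesis that $G$ is Connes-embeddable --- which is the setting of the paper --- I would fix a trace-preserving embedding $G \hookrightarrow \prod_{\mathcal{U}} U(N_k)$, where $\mathcal{U}$ is a non-principal ultrafilter on $\mathbb{N}$, and then seek sequences $T_i^{(k)} \in U(N_k)$, $i=1,\dots,n$, such that $\|w(T_1^{(k)},\dots,T_n^{(k)}) - I\|$ tends to $0$ in the normalized Hilbert--Schmidt norm along $\mathcal{U}$. The ultraproduct limits $T_1,\dots,T_n$ would then solve $w(T_1,\dots,T_n) = e$ exactly inside the group $H := \langle G, T_1,\dots,T_n\rangle \subseteq \prod_{\mathcal{U}} U(N_k)$, which contains $G$ faithfully by construction.

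The crucial step is producing these approximate solutions, and this is where the hypothesis $\varepsilon(w) \neq 0 \in \mathbb{F}_n$ must enter decisively. Writing $w$ as an alternating product of letters from $\mathbb{F}_n$ and constants from $G$, the non-triviality of $\varepsilon(w)$ ensures that if $T_1^{(k)},\dots,T_n^{(k)}$ are Haar-generic in $U(N_k)$, the unitary $w(T^{(k)})$ is itself asymptotically close to Haar-distributed and in particular far from being concentrated near the identity. From such a generic starting point one can then try to deform $T^{(k)}$ along a path in $U(N_k)^n$ so as to drive $w(T^{(k)})$ toward $I$; the formal implementation is a degree / mean-value argument, with the non-degeneracy coming from $\varepsilon(w)\neq 0$ viewed as a free-group word whose ``Jacobian'' on $U(N_k)^n$ has the right rank at generic points. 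This should yield the desired $o(1)$ error in Hilbert--Schmidt norm, and the ultraproduct step then turns the approximate identity into an exact one.

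The main obstacle is that this analytic strategy only covers Connes-embeddable $G$: in view of the recent refutation of the Connes embedding conjecture, there is no reason to expect every group to embed into an ultraproduct of unitary groups, so extending to arbitrary $G$ requires a fundamentally different idea. A natural alternative is a diagrammatic / topological attack in the spirit of Klyachko's car theorem: form the relative presentation 2-complex $X = (K(G,1) \vee \bigvee_n S^1) \cup_w e^2$ and show, using $\varepsilon(w)\neq 0 \in \mathbb{F}_n$ as a combinatorial input, that the inclusion $K(G,1) \hookrightarrow X$ is $\pi_1$-injective; equivalently, that any reduced van Kampen diagram over the relative presentation contains no $G$-cell. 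Finding a uniform combinatorial criterion for this that depends only on $\varepsilon(w) \in \mathbb{F}_n$ is, I expect, the essential difficulty --- and indeed the conjecture is open in full generality. For the last clause, finiteness of $H$ when $G$ is finite, one would run the analytic argument in the fixed dimension $N = |G|$ using the regular representation: the approximate solutions then live in the compact real algebraic variety $U(|G|)^n$, compactness yields an exact solution there, and a further algebraic step (e.g.\ approximating each $T_i$ by a torsion element in a suitable finite subgroup of $U(|G|)$ generated together with $G$) produces the required finite extension.
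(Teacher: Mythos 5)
First, note that the statement you are proving is stated in the paper as a \emph{conjecture}: the paper itself only establishes the first clause, and only for Connes-embeddable $G$ (as a consequence of \autoref{thm:main-result} applied to one-relator presentations), while the finiteness clause is explicitly left open for a single equation and is even false for systems (\autoref{exm:infinite-overgroup}). Your overall analytic frame --- embed $G$ into an ultraproduct $\prod_{\mathcal U}{\rm U}(N_k)$, produce approximate solutions in each ${\rm U}(N_k)$, and pass to the limit --- is indeed the Pestov/Gerstenhaber--Rothaus framework the paper uses. But the step you flag as ``crucial'' contains the genuine gap. You assert that $\varepsilon(w)\neq 0$ forces the word map $w\colon {\rm U}(N)^n\to{\rm U}(N)$ to have a non-degenerate Jacobian at generic points, so that a degree or mean-value argument yields surjectivity. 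This is false in exactly the hard cases: the Gerstenhaber--Rothaus degree computation shows that the pullback of the generating cohomology classes is governed by the exponent sums of $\varepsilon(w)$ in the generators $x_1,\dots,x_n$, so if $\varepsilon(w)\in[\mathbb F_n,\mathbb F_n]$ (nonzero but with all exponent sums zero) the induced map on top cohomology vanishes and no surjectivity can be concluded. Genericity of Haar-distributed $T_i$ does not rescue this; the entire difficulty of the generalized Kervaire--Laudenbach conjecture lives precisely in the words with vanishing exponent sums, which is why the case $\varepsilon(w)\notin[\mathbb F_n,[\mathbb F_n,\mathbb F_n]]$ was the previously known frontier.

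The paper's new idea, which is absent from your proposal, is to repair the degenerate degree by passing to a covering space. One forms $\Gamma=\langle x_1,\dots,x_n\mid\varepsilon(w)\rangle$ and looks for a covering of its presentation complex with trivial second homology: for a torsion-free one-relator group the presentation complex is aspherical (Lyndon), and in the torsion case the covering associated to the kernel of $\Gamma\to\Gamma'=\langle x_1,\dots,x_n\mid z\rangle$, where $\varepsilon(w)=z^r$, works. The solution is then sought in the wreath-type overgroup $\bigl(\prod_{\pi\backslash\Gamma}{\rm U}(m)\bigr)\rtimes\Gamma$, which converts the single equation into an infinite system indexed by the cells of the covering complex; compactness of $\prod{\rm U}(m)$ reduces this to finite subsystems, and on each finite subcomplex the second differential $d_2$ is injective, so the classical Gerstenhaber--Rothaus determinant is nonzero and the degree argument finally applies. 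Without this covering/wreath-product step your argument cannot get past words in the commutator subgroup. Separately, your paragraph on the finite case does not work: there is no mechanism for approximating the unitaries $T_i$ by elements generating a finite group together with $G$, and the paper states that its method cannot address this clause at all.
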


The crux of the matter is that when forming $\varepsilon(w)$ by deleting the constants from $G$, there may be a lot of cancellation among the variables and their inverses, so that the structure of $w$ can be considerably more complicated. 
The combinatorial approach pursued, among others, by Howie and Gersten \cites{MR919828, MR614523}, leads to positive results only under constraints on the {\it unreduced} words obtained by deleting the constants.
However, note that Klyachko's combinatorial methods in \cites{MR1218513,MR2251364} do allow for cancellation, but they assume that $G$ is torsion-free.
 An entirely different method that naturally sidesteps the complications of cancellation is the topological approach by Gerstenhaber and Rothaus \cite{MR0166296}, which assumes that $G$ embeds into a unitary group $\mathrm{U}(n)$. In \cite{MR2460675}, Pestov made the connection with Connes' Embedding Problem and brought their result into the following form:

\begin{theorem}[Gerstenhaber--Rothaus \cite{MR0166296}, Pestov \cite{MR2460675}]\label{thm:gerstenhaber-rothaus}
Let $G$ be a Connes-embeddable group and let $w_1,\dots,w_k \in \mathbb F_n \ast G$. If the presentation complex of the presentation
\[\langle x_1,\dots,x_n \mid \varepsilon(w_1),\dots,\varepsilon(w_k) \rangle\] has trivial second homology, then the system of equations $w_1,\dots,w_k$ is solvable in a group $H$ containing $G$. If $G$ is finite, then $H$ can be taken to be finite.
\end{theorem}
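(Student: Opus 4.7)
The plan is to combine the classical topological argument of Gerstenhaber--Rothaus with a passage through a metric ultraproduct of finite-dimensional unitary groups, as suggested by Pestov's reformulation of Connes-embeddability. Concretely, I would fix a Connes embedding $\iota \colon G \hookrightarrow \prod_{\omega} U(N_i)$, the metric ultraproduct with respect to a nonprincipal ultrafilter $\omega$ and the normalized Hilbert--Schmidt norm. For every constant $a \in G$ appearing in some $w_j$, I pick a representing sequence $(a^{(i)})_i$ with $a^{(i)}\in U(N_i)$.

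The heart of the argument is the finite-dimensional assertion: for each $i$, the word map
\[
\Phi_i \colon U(N_i)^n \to U(N_i)^k,\qquad (g_1,\dots,g_n) \mapsto \bigl(w_1(g,a^{(i)}),\dots,w_k(g,a^{(i)})\bigr),
\]
hits $(e,\dots,e)$. Since $U(N_i)$ is path-connected, $\Phi_i$ is homotopic, by moving the constants $a^{(i)}$ along paths to the identity, to the augmented word map $\bar\Phi_i$ obtained by evaluating $\varepsilon(w_1),\dots,\varepsilon(w_k)$ at the variables alone. On each odd-degree primitive generator of $H^*(U(N_i);\Q)$ the induced map $\bar\Phi_i^*$ is given by the $k\times n$ integer matrix $M$ of exponent sums of $\varepsilon(w_j)$ in the $x_i$. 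The hypothesis $H_2(X)=0$ is exactly the assertion that the cellular boundary $\partial_2\colon \Z^k\to \Z^n$ is injective, which coincides with $M^T$ being injective, i.e.\ with $M$ having rank $k$. After a $\mathrm{GL}_n(\Z)$-change of variables on $U(N_i)^n$, one reduces $\bar\Phi_i$ up to homotopy to a projection composed with a square word map whose exponent-sum matrix has nonzero determinant; the classical Brouwer-degree argument of Gerstenhaber--Rothaus (using that the degree is a nonzero power of that determinant) then gives surjectivity of $\bar\Phi_i$ onto $U(N_i)^k$, and surjectivity is preserved under the homotopy back to $\Phi_i$.

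With solutions $(g_1^{(i)},\dots,g_n^{(i)})\in U(N_i)^n$ of $\Phi_i=e$ in hand, I take the classes $\bar g_j=[g_j^{(i)}]$ in the ultraproduct; they satisfy $w_l(\bar g,\iota(a))=e$ for all $l$, so the subgroup $H$ generated by $\iota(G)$ and the $\bar g_j$ contains $G$ and solves the system. When $G$ is finite, I would instead fix a single faithful unitary representation $G\hookrightarrow U(N)$, apply the finite-dimensional step once in $U(N)$ to obtain $g_1,\dots,g_n\in U(N)$, and take $H$ to be a finite quotient of the finitely generated linear group $\langle G,g_1,\dots,g_n\rangle\le U(N)$, using Mal'cev residual finiteness and intersecting finitely many finite-index normal subgroups avoiding the finite set $G\setminus\{e\}$. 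The step I expect to be the main obstacle is the topological argument in the non-square case $k<n$: reducing it cleanly to an honest Brouwer degree requires carefully exploiting the $\mathrm{GL}_n(\Z)$-action on variables together with the Pontryagin product structure on $H^*(U(N_i);\Q)$ so that the injectivity of $\partial_2$ translates into nonvanishing of a top-dimensional integral on the appropriate factor.
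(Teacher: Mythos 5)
Your proposal is essentially the paper's own route: the paper cites this theorem but reproduces exactly this argument inside the proof of \autoref{lem:main-lemma} (injectivity of $d_2$, passing to a nonsingular $k\times k$ minor by freezing the remaining variables rather than your $\mathrm{GL}_n(\Z)$ change of variables, primitivity of the odd generators of $H^*({\rm U}(m))$, nonvanishing pullback of the top class) and performs the same ultraproduct passage in the proof of \autoref{thm:main-result}, while your Mal'cev residual-finiteness step for finite $G$ is the standard supplement from \cite{MR2460675} that the paper does not spell out. The one phrase you should repair is ``surjectivity is preserved under the homotopy back to $\Phi_i$'': surjectivity is not a homotopy invariant, but the induced map on top-dimensional cohomology is, and its nonvanishing for $\Phi_i^*=\bar\Phi_i^*$ is what forces $\Phi_i$ itself to be surjective.
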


Here, a Connes-embeddable group is by definition a group which can be embedded into a certain metric ultraproduct of unitary groups, see \cites{MR2460675, icm} for more background on this topic.
This proves the original Kervaire--Laudenbach Conjecture for Connes-embeddable groups, a large class of groups which includes all sofic groups, and to which to date no counterexamples are known. In regard to the \autoref{conj:generalized-KLC}, however, \autoref{thm:gerstenhaber-rothaus} only applies to a limited class of equations.

We will combine the topological approach with combinatorial methods and a compactness argument in order to show the following theorem, which is our main result. The wreath product construction, which plays a central role in our proof, was first applied by Levin \cite{MR142643} to specific equations with only a single variable.

\begin{theorem}\MN{\label{thm:main-result}}
Let $G$ be a Connes-embeddable group and let $w_1,\dots,w_k \in \mathbb F_n \ast G$. If the presentation complex of the presentation
$$ \langle x_1,\dots,x_n \mid \varepsilon(w_1),\dots,\varepsilon(w_k) \rangle$$ admits a covering with trivial second homology, then the system of equations $w_1,\dots,w_k$ is solvable in a group containing $G$.
\end{theorem}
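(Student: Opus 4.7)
The plan is to reduce to \autoref{thm:gerstenhaber-rothaus} by lifting the entire system of equations along the given covering. Let $P$ be the ``full'' presentation complex with fundamental group $\Gamma_G := (G \ast \mathbb{F}_n)/\langle\!\langle w_1,\dots,w_k\rangle\!\rangle$, obtained by attaching $k$ two-cells along the paths $w_j$ to the wedge of $n$ circles with a classifying space $BG$. The collapse $P \to Q$ that sends $BG$ to a point realises the augmentation $\Gamma_G \twoheadrightarrow \Gamma := \pi_1(Q)$. Pulling back the given covering $\tilde Q \to Q$, which corresponds to a subgroup $\Lambda \leq \Gamma$, produces a covering $\tilde P \to P$ whose fundamental group $\tilde\Gamma_G$ sits inside $\Gamma_G$ as the preimage of $\Lambda$; in particular $\tilde\Gamma_G \hookrightarrow \Gamma_G$ is an inclusion of subgroups.

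Geometrically, $\tilde P$ contains one copy $BG_v$ of $BG$ at each vertex $v \in V(\tilde Q)$, joined by the lifted edges, with 2-cells attached along lifts $\tilde w_j$ of the $w_j$; the constants of $\tilde w_j$ are distributed among the copies $BG_v$ at the vertices visited by the lifted attaching path. Choosing a spanning tree of $\tilde Q$ one obtains a presentation of $\tilde\Gamma_G$ whose generators come from the free product $G_\ast := \ast_{v \in V(\tilde Q)} G_v$ together with one generator for each non-tree edge, modulo the relators $\tilde w_j$. The key observation is that collapsing each $BG_v$ back to its vertex reproduces $\tilde Q$ (up to the homotopy equivalence that contracts the spanning tree), so the presentation complex of this enlarged system in the sense of \autoref{thm:gerstenhaber-rothaus} still has trivial $H_2$.

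When $\tilde Q$ is finite, $G_\ast$ is a finite free product of Connes-embeddable groups, hence itself Connes-embeddable, and \autoref{thm:gerstenhaber-rothaus} applies directly to the enlarged system to yield $G_\ast \hookrightarrow \tilde\Gamma_G$. Restricting to the basepoint copy of $G$ and composing with $\tilde\Gamma_G \hookrightarrow \Gamma_G$ gives the desired injection $G \hookrightarrow \Gamma_G$, which is equivalent to solvability of the original system in a supergroup of $G$.

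For an infinite covering, exhaust $\tilde Q$ by finite subcomplexes $\tilde Q^{(m)}$. Each still has trivial $H_2$, because $\tilde Q$ is two-dimensional and $H_2(\tilde Q) = 0$ forces the cellular boundary $C_2(\tilde Q) \to C_1(\tilde Q)$ to be injective, and this injectivity is inherited by any subcomplex. Thus the finite case applies at each stage and produces partial solutions inside Connes-embeddable supergroups. I expect the main obstacle to be assembling these finite-level solutions into a single global one, since the full system involves infinitely many constants and relators: this calls for a compactness argument inside a metric ultraproduct of unitary groups, leveraging the fact that each relator $\tilde w_j$ mentions only finitely many generators so that the family of defining conditions is, in a suitable sense, finitely satisfiable in a sufficiently saturated target.
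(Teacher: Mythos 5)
Your strategy is genuinely different from the paper's and, for finite coverings, essentially works. The paper does not reduce to \autoref{thm:gerstenhaber-rothaus} as a black box: it re-runs the Gerstenhaber--Rothaus degree argument inside the explicit overgroup $\left(\prod_{\pi\backslash\Gamma}{\rm U}(m)\right)\rtimes\Gamma$, writing the candidate solution as $z_i=\left(\left(u_{i,[\gamma]}\right)_{[\gamma]},x_i\right)$ so that the original system becomes an enlarged system $\{\bar w_{j,[\gamma]}\}$ whose associated two-complex is exactly the covering. Your lift of the full complex $P$ (with $BG$ wedged on) is the group-theoretic shadow of the same enlargement, with $G_\ast=\ast_v G_v$ playing the role of $\prod_{\pi\backslash\Gamma}{\rm U}(m)$. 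What your reduction buys is conceptual economy and a statement purely about groups; what it costs is a nontrivial external input that you do not flag: to apply \autoref{thm:gerstenhaber-rothaus} to $G_\ast$ you must know that a free product of Connes-embeddable groups is Connes-embeddable. This is true (Brown--Dykema--Jung), but it is a substantial theorem, and the paper's formulation deliberately avoids it by never leaving products of unitary groups. Also, the fact that there is a full copy of $BG$ over every vertex of $\tilde Q$ rests on the observation that $G$ and all its conjugates die under $\Gamma_G\to\Gamma$ and hence lie in the preimage of $\Lambda$; this deserves a sentence, since it is what makes $\tilde\Gamma_G$ a free product of copies of $G$ with a free group modulo the lifted relators.

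The genuine gap is the infinite covering case, which you explicitly leave open. The proposed fix --- saturation in a metric ultraproduct --- is heavier than necessary and would still need work to convert ``finitely satisfiable in a saturated target'' back into injectivity of $G\to\Gamma_G$. Two clean ways to close it: (i) the paper's way, solving the enlarged system inside the compact group $\prod_{\{1,\dots,n\}\times\pi\backslash\Gamma}{\rm U}(m)$ and taking an accumulation point of solutions of finite subsystems (each finite subcomplex inherits injectivity of $d_2$, as you correctly observe); or (ii) a purely group-theoretic finiteness argument: if some $1\neq g\in G$ died in $\tilde\Gamma_G=\left(G_\ast\ast F(E)\right)/\langle\!\langle \tilde w_s\rangle\!\rangle$, it would be a product of finitely many conjugates of finitely many relators, all supported on a finite subcomplex of $\tilde Q$, so $g$ would already die in the quotient built from that finite subcomplex, contradicting your finite case. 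Either route completes the proof; as written, your last paragraph is a statement of intent rather than an argument.
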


This result applies to many more systems of equations than \autoref{thm:gerstenhaber-rothaus}. In particular, it proves the first part of \autoref{conj:generalized-KLC} for Connes-embeddable groups.
This was previously only known in the case that $\varepsilon(w) \not \in [\mathbb F_n,[\mathbb F_n,\mathbb F_n]]$, see the main result of \cite{klyachkothom2017new}.
If Howie's conjecture in \cite{MR614523} is true and \autoref{thm:gerstenhaber-rothaus} holds for all groups $G$, then the condition that $G$ is Connes-embeddable can be dropped in \autoref{thm:main-result}.
As of now, it seems unclear to us if the the condition on existence of a covering with trivial second homology is the optimal condition. However, \autoref{gersten} shows that already the slightly weaker assumption of vanishing of the second Hurewicz map is not enough to imply the conclusion.

\smallskip

The proof for the case of one equation, which contains the essential new idea, was found by the first author with a slightly different argument, making essential use of orderability of one-relator groups. The present joint paper contains a reduction to a purely homological condition and arose when exploiting the newly found method in order to cover the case of many equations.

\section{Proof of the main result}

First of all, we observe that the results of Gerstenhaber--Rothaus and Pestov extend to systems of equations with infinitely many variables and equations: Let $G$ be a group and $X$ a set of variables, possibly infinite. Let $\mathbb{F}_X$ be the free group over $X$, and $W\subset\mathbb{F}_X\ast G$ a set of equations, possibly infinite. Then we call $(X,W)$ a system of equations over $G$.

\begin{lemma}\label{lem:infinite-GR}
Let $(X,W)$ be a system of equations over a group $G$ and assume that the presentation complex of the presentation $\langle X\mid\varepsilon(W)\rangle$ has trivial second homology.
\begin{enumerate}
\item\label{itm:finite}
If $G=\mathrm{U}(m)$ for some $m \in \mathbb N$, then the system of equations can be solved in $G$.
\item\label{itm:connes-embeddable}
If $G$ is Connes-embeddable, then the system of equations can be solved in some Connes-embeddable group $H$ containing $G$.
\end{enumerate}
\end{lemma}
\begin{proof}
(1) If $F\subset W$ is a finite subset and $X_F\subset X$ is the finite subset of variables that appear in $F$, then the presentation complex of $\langle X_F\mid \varepsilon(F)\rangle$ is a subcomplex of the presentation complex of $\langle X\mid \varepsilon(W)\rangle$ and hence has trivial second homology. By the result of Gerstenhaber--Rothaus, $(X_F,F)$ has a solution $z_F\colon X_F\to\mathrm{U}(m)$. In this way we obtain, for every $x\in X$, a net that assigns to each finite subset $F\subset W$ a solution $z_F(x)$. Because $\mathrm{U}(m)$ is compact, we can pick for every net an accumulation point $z(x)$. The assignment $x\mapsto z(x)$ solves the system of equations.

(2) By definition, the group $G$ is Connes-embeddable iff it embeds into some ultraproduct $\left(\prod_{i\in I} {\rm U}(m_i)\right)/\sim$, where the equivalence relation is convergence with regard to the normalized Hilbert--Schmidt norm and some ultrafilter on $I$.
We can lift the constants in the system of equations $(X,W)$ to $\prod_{I} {\rm U}(m_i)$ and then apply part (1) to all ${\rm U}(m_i)$--factors simultaneously to obtain a solution for the new system of equations in $\prod_{I} {\rm U}(m_i)$. The image of this solution in $\left(\prod_{i\in I} {\rm U}(m_i)\right)/\sim$ is a solution to the original system of equations.
\end{proof}

\begin{remark} \label{newrem} 
Part (\ref{itm:connes-embeddable}) of \autoref{lem:infinite-GR} also holds if
the coefficient group $G$ is not Connes-embeddable but still satisfies the conclusion of the theorem of Gerstenhaber--Rothaus, namely that all finite systems of equations are solvable over $G$ if the second homology of the associated presentation complex is trivial. To pass to infinite systems of equations, compactness can be replaced by an ultrafilter argument.
However, compared to \autoref{lem:infinite-GR}, we get even less control over the group $H$ in this case.
\end{remark}

One can associate to any system of equations $(X,W)$ a cell complex $Y$ with a single $0$-cell $c_0$, one (oriented) $1$-cell $c_x$ for every variable $x\in X$, and one $2$-cell $c_w$ for every equation $w\in W$, glued to the $1$-skeleton according to the appearances of the letters from $X$ in $w$, where we ignore the constants from $G$.
This cell complex depends on the specific system of equations, but up to homotopy it is the same as the presentation $2$-complex of $\langle X\mid\varepsilon(W)\rangle$.

For any subgroup $\pi$ of $\Gamma=\pi_1(Y)=\langle X\mid\varepsilon(W)\rangle$ there is a corresponding covering complex $\bar{Y}\to Y$. We mimic this construction to build the ``$\pi$-covering'' $(\bar{X},\bar{W})$ of the system of equations $(X,W)$: 
Its set of variables is $\bar{X}=\pi\backslash\Gamma\times X$. To obtain the set of equations $\bar{W}$ we use the canonical right action $\pi\backslash\Gamma\curvearrowleft\Gamma\leftarrow\mathbb{F}_X\leftarrow\mathbb{F}_X\ast G$ to construct, inductively, the set-theoretic map $\psi\colon\pi\backslash\Gamma\times(\mathbb{F}_X\ast G)\to\mathbb{F}_{\bar{X}}\ast G$ by letting for all $[\gamma]\in\pi\backslash\Gamma$, $g\in G$, $x\in X$, $l_1,l_2\in\mathbb{F}_X\ast G$
\begin{equation}\label{eqn:covering-equations}
\begin{gathered}
\psi([\gamma],g)=g,\quad \psi([\gamma],x)=([\gamma],x),\quad \psi([\gamma],x^{-1})=([\gamma]. x^{-1},x)^{-1},\\
\psi([\gamma],l_1\cdot l_2)=\psi([\gamma],l_1)\cdot\psi([\gamma].l_1,l_2).
\end{gathered}
\end{equation}
Then we set $\bar{W}=\{\psi([\gamma],w)\mid [\gamma]\in\pi\backslash\Gamma,w\in W\}$. Note that $\psi$ is naturally a cocycle, which is determined by its tautologically chosen values on $X \cup G$.

Using the canonical correspondence between the $0$-cells of $\bar{Y}$ and the right cosets $\pi\backslash\Gamma$, we can match the variables $\bar{X}$ to the $1$-cells of $\bar{Y}$, assigning $([\gamma],x)\in\bar{X}$ to that lift of $c_x$ which starts at the $[\gamma]$--th $0$-cell and ends at the $[\gamma].x$--th. Then, any equation $\psi([\gamma],w)\in\bar{W}$ describes, after removing the constants, the attaching map of a $2$-cell in $\bar{Y}$, namely that lift of $c_w$ for which the attaching map starts at the $[\gamma]$--th $0$-cell. Consequently, the 2\nobreakdash-complex associated to $(\bar{X},\bar{W})$ can be obtained from $\bar{Y}$ by collapsing the $0$-skeleton to a single vertex.

Now, we have the following crucial observation:
\begin{lemma}\label{lem:covering}
If $(\bar{X},\bar{W})$ is solvable in a group $H$ containing $G$, then $(X,W)$ is solvable in the group $\bar{H}:=\big(\prod_{\pi\backslash\Gamma}H\big)\rtimes\Gamma$.
\end{lemma}
\begin{proof}
First, recall that the multiplication and group inversion in $\bar{H}$ are given by
\begin{equation}\label{eqn:semidirect-product}
\begin{gathered}
\Big([\gamma]\mapsto h_{[\gamma]}, \lambda\Big)\cdot\Big([\gamma]\mapsto h'_{[\gamma]}, \lambda'\Big)=\Big([\gamma]\mapsto h_{[\gamma]}h'_{[\gamma].\lambda}, \lambda\lambda'\Big),\\
\Big([\gamma]\mapsto h_{[\gamma]}, \lambda\Big)^{-1}=\Big([\gamma]\mapsto (h_{[\gamma].\lambda^{-1}})^{-1}, \lambda^{-1}\Big).
\end{gathered}
\end{equation}
Assume that a solution to $(\bar{X},\bar{W})$ is given in the form of a homomorphism $\bar{z}\colon\mathbb{F}_{\bar{X}}\ast G\to H$ that is injective on $G$ and trivial on each $\bar{w}\in\bar{W}$. Then we check that the homomorphism $z\colon\mathbb{F}_X\ast G\to\bar{H}$ given by
\begin{equation*}
g\mapsto\Big([\gamma]\mapsto\bar{z}(g),\grpone\Big),\qquad
x\mapsto\Big([\gamma]\mapsto\bar{z}(([\gamma],x)),\lambda_x\Big)\qquad\forall g\in G, x\in X
\end{equation*}
is a solution to $(X,W)$, where $\lambda_x\in\Gamma$ denotes the generator corresponding to $x$.
Indeed, $z$ it is injective on $G$, and it maps each $w\in W$ into the subgroup $\prod H<\bar{H}$ because $\varepsilon(w)$ is a relation in $\Gamma$.
Let $\phi\colon\pi\backslash\Gamma\times(\mathbb{F}_X\ast G)\to H$ be the set-theoretic map that sends $([\gamma],l)$ to the $[\gamma]$--th coordinate of the $\big(\prod H\big)$--part of $z(l)\in\bar{H}$. From \autoref{eqn:semidirect-product} it follows that for all $[\gamma]\in\pi\backslash\Gamma$, $g\in G$, $x\in X$, $l_1,l_2\in\mathbb{F}_X\ast G$
\begin{equation*}
\begin{gathered}
\phi([\gamma],g)=\bar{z}(g),\quad \phi([\gamma],x)=\bar{z}(([\gamma],x)),\quad \phi([\gamma],x^{-1})=\bar{z}(([\gamma]. x^{-1},x)^{-1}),\\
\phi([\gamma],l_1\cdot l_2)=\phi([\gamma],l_1)\cdot\phi([\gamma].l_1,l_2).
\end{gathered}
\end{equation*}
Comparing this to \autoref{eqn:covering-equations}, it is clear that $\phi=\bar{z}\circ\psi$ and hence $\phi([\gamma],w)=\bar{z}(\bar{w})=\grpone$ for all $[\gamma]\in\pi\backslash\Gamma$, $w\in W$. Therefore, $z$ is trivial on all $w\in W$.
\end{proof}

We are now ready to prove our main result.

\begin{proof}[Proof of \autoref{thm:main-result}:]
Let $(X,W)=\big(\{x_1,\ldots,x_n\},\{w_1,\ldots,w_k\}\big)$ be the given system of equations, $Y$ its associated cell complex, and $\Gamma=\langle X\mid \varepsilon(W)\rangle$ the associated group presentation.
Let $\bar{P}\to P$ be the covering of the presentation complex from the assumption, $\pi<\Gamma$ be the associated inclusion of fundamental groups, and $(\bar{X},\bar{W})$ the covering system of equations obtained from this group inclusion.

Then the cell complex associated to $(\bar{X},\bar{W})$ has the same second homology as the covering $\bar{Y}\to Y$ associated to $\pi<\Gamma$. Because $\bar{Y}$ is homotopy equivalent to $\bar{P}$, this homology group is trivial. Hence, by \autoref{lem:infinite-GR}, the system $(\bar{X},\bar{W})$ has a solution over $G$. Now, using \autoref{lem:covering}, so does the original system $(X,W)$.
\end{proof}

\begin{remark}
The condition that the presentation complex has a covering with trivial second homology can be reformulated in terms of homology with local coefficients:
It is met iff there exists a $\Gamma$-set $Z$ such that the presentation complex has trivial second homology with local coefficients in $\Z[Z]$. A priori, this is an interesting ring theoretic condition on the second differential of the equivariant chain complex.
\end{remark}

\begin{remark}
Note that, unlike in \autoref{lem:infinite-GR}, we cannot conclude that the group $\bar{H}$ in which the system of equations from \autoref{thm:main-result} can be solved is Connes-embeddable. However, it seems likely that $\bar H$ can be chosen to satisfy the conclusion of \autoref{thm:gerstenhaber-rothaus}. This will be subject of further investigation.
\end{remark}

\section{Applications and open questions}\label{sec:applications}
There are a few special cases, when the condition of \autoref{thm:main-result} is always satisfied:
\begin{enumerate}
\item When the presentation complex itself has trivial second homology, the theorem reduces to the classical result of Gerstenhaber--Rothaus. This can only occur if there are at most as many equations as variables.

\item When the presentation complex is aspherical. One large class of examples where this happens are the presentation complexes of torsion-free small cancellation groups, see for example \cite{huebschmann}.
In this case the number of equations can be larger than the number of variables, or even infinite (the theorem adapts to the infinite case without modifications), which is counter intuitive at first sight. This should lead to some interesting examples.

\item\label{item:one-relator-groups} When there is a single (non-trivial) equation $w$, i.e.\ when $\Gamma$ is a one-relator group. If $\Gamma$ is torsion-free, the presentation complex is aspherical. If $\Gamma$ has torsion, it follows $\varepsilon(w)=z^r$ for some $z\in\mathbb{F}_n$ not a proper power, and we let $\Gamma'=\langle x_1,\dots,x_n\mid z\rangle$ be the torsion-free one-relator quotient. Then, the covering of the presentation complex corresponding to the subgroup $\pi=\mathrm{ker}(\Gamma\to\Gamma')$ has trivial second homology. Indeed, the second differential in the cellular chain complex of this covering differs only by a factor of $r$ from the differential in the chain complex of the universal covering of the presentation complex of $\Gamma'$.

\item Another example where the presentation complex is aspherical is when $k=n-1$ and the first $\ell^2$--Betti number of the group $\Gamma$ vanishes. In this case the second homology of the universal covering of the presentation complex embeds into the second $\ell^2$-homology, which vanishes due to vanishing of the Euler characteristic, see \cite{berhil}.
\end{enumerate}

When we want to show that a given universal system of group equations is solvable over Connes-embeddable groups,
\autoref{thm:main-result} is currently the best known result.
In the case of a single relation $\varepsilon(w)$ the criterion is also sharp as one can easily see. For more than one relation we do not know, but we suspect that there are universal systems of group equations that do not meet the conditions of the theorem but are still solvable.
One simple example where this might be the case is the universal system of equations with variables $a,b,c$ and content $[a,b], [b,c], [c,a]$. Here, the second homotopy group of the presentation complex is isomorphic to $\Z[\Z^3]$, generated by the Hall--Witt identity. This identity gives rise to a non-vanishing homology class on every non-trivial covering.

\begin{question} \label{q1}
Can the universal system of group equations in three variables $a,b,c$ with content $[a,b], [b,c], [c,a]$ be solved over Connes-embeddable groups?
\end{question}

\begin{remark}
Note, however, that a positive answer to \autoref{q1} cannot generalize. Indeed, the universal system of group equations in four variables $a,b,c,d$ with content $[a,b],[a,c],[a,d],[b,c],[b,d],[c,d]$ is not solvable because $[a,c]=[a,d]=[b,c]=[b,d]=\grpone$ implies that $[a,b]$ commutes with $[c,d]$.
\end{remark}
Classically, criteria for solvability of systems of equations over groups have often been given in terms of the associated $2$-dimensional cell complex and not the presentation $2$\nobreakdash-complex or its homotopy type. In this setting, the coefficients from $G$ are allowed to be arbitrary in the equations but cancellation among the variables is controlled by the associated 2\nobreakdash-complex. If all systems of equations associated to a given cell complex are solvable, the complex is called \emph{Kervaire}.
Of course, the associated cell complex must be Kervaire if the associated universal system of group equations is solvable. But the opposite implication is not always true, as the following example shows.

\begin{example} \label{gersten} Gersten gave this example in \cite{MR888882}.
Consider systems of equations in variables $a,b,c,d,t$ with content $a^2, b^2, c^2,d^2, abtcdt^{-1}$. The presentation complex of the associated group presentation is Kervaire, as can be shown with the methods of Brick \cite{brick}. However, the system of equations
\[a^2g_1,\quad b^2g_1,\quad c^2t^{-1}g_1 t,\quad d^2t^{-1}g_1 t,\quad abtcdt^{-1}g_2\]
with constants $g_1,g_2 \in G$
is in general not solvable over $G$. Indeed, from the first four equations it follows that $a,b,tct^{-1},tdt^{-1}$ must all commute with $g_1$. Thus, if $[g_1,g_2]\neq \grpone$, there is no solution. In particular, the associated cell complex, which has some extra tweaks due to the appearance of $t^{-1}t$ in the attaching maps of two of its faces, is not Kervaire, even though it is homotopy equivalent to the presentation complex from above.

Gersten also noted that the above cell complexes are Cockcroft, i.e.\ the second Hurewicz map $\pi_2(X)\to H_2(X)$ is zero. This homotopy invariant condition is therefore insufficient for solvability of universal systems of group equations. Compare this to the slightly stronger condition appearing in \autoref{thm:main-result}.
\end{example}

The previous example shows that some information is lost by grouping systems of equations into classes according to their content instead of the associated $2$-complex.
But it also shows that the property of being a Kervaire complex is not invariant under homotopy equivalence, while our coarser notion turns out to behave better in this regard. Indeed, the sufficient condition in \autoref{thm:main-result} depends only on the homotopy type of the presentation complex, and there is some hope that the solvability of a universal system of group equations also depends only on the (simple?) homotopy type of the presentation complex.
In fact, it is easy to see that the following operations on group presentations have no effect on whether the corresponding universal system of group equations is solvable:

\begin{itemize}
\item Nielsen transformations on the relations,
\item Nielsen transformations on the generators,
\item introduction of a new generator and a new relation in which the new generator occurs exactly once, and
\item conjugation of a relation with a word in the generators
\end{itemize}

On the level of $2$-complexes, the equivalence relation generated by these operations is simple homotopy equivalence with the restriction that all elementary expansions and contractions must be of dimension $\leq 3$. The question whether this is the same as simple homotopy equivalence is closely related to the Andrews--Curtis Conjecture, see \cite{MR813099} for background.
We also recall that any two presentations of the same group are related by Tietze transformations. Hence, they can be related by the above operations after adding to each presentation a sufficient number of trivial relations.

\begin{question}
To what extent does the solvability of a universal system of equations with variables $x_1,\dots,x_n$ and content $\varepsilon(w_1),\dots,\varepsilon(w_k)$ depend only on the associated group $\Gamma=\langle x_1,\dots,x_n\mid\varepsilon(w_1),\dots,\varepsilon(w_k)\rangle$ and on the deficiency of the presentation?
\end{question}

Another line of questions concerns the nature of the group in which a system of equations is solved. In contrast to the approach of \cite{klyachkothom2017new}, our method is not able to address the second part of \autoref{conj:generalized-KLC}: Whether, if the group $G$ is finite, the group $H$ can also be chosen to be finite. For a single equation this part of the conjecture is still open. However, for systems of equations we have the following counterexample.

\begin{example}\label{exm:infinite-overgroup}
Let $G$ be a non-trivial finite group, $g \in G$ non-trivial, and consider the following system of equations with three variables $a,b,c$:
\[(bab^{-1})a (bab^{-1})^{-1}=a^2,\quad [a,c] = g.\]
The first equation is Baumslag's example that forces the element $a$ to either be trivial or have infinite order. Since the former is prohibited by the second equation, there can be no solution in a finite group.
However, the presentation complex of $\langle a,b,c \mid (bab^{-1})a (bab^{-1})^{-1}a^{-2}, [a,c] \rangle$ is aspherical by \cite{MR627092}*{Theorem 3.1} and hence our theorem applies.
\end{example}

Note that the group $\Gamma=\langle a,b,c \mid (bab^{-1})a (bab^{-1})^{-1}a^{-2}, [a,c] \rangle$ of the previous example is not residually finite. We suspect that the second part of \autoref{conj:generalized-KLC} still holds for systems of equations if the presented group is residually finite. In general, however, the semidirect product constructed in our proof is not even known to be Connes-embeddable itself. This leaves a lot of open questions.

\begin{question}
Under what conditions can a system of equations with constants in a Connes-embeddable group $G$ be solved in a Connes-embeddable group containing $G$? 
\end{question}
\begin{question}
Under what conditions can a system of equations with constants in a finite group $G$ be solved in a finite group containing $G$?
\end{question}

\section*{Acknowledgments}

We thank the anonymous referee for careful remarks that led to a structural improvement of the proof of the main result and new directions as discussed in \autoref{newrem}. This research was supported by the ERC Consolidator Grant No.\ 681207.

\begin{bibdiv}
\begin{biblist}

\bib{berhil}{article}{
   author={Berrick, Jon},
   author={Hillman, Jonathan},
   title={The Whitehead Conjecture and $L^2$-Betti numbers; in Guido's book of conjectures: collected by Indira Chatterji; },
   journal={L'Enseignement Math\'ematique},
   volume={54},
   year={2008},
   pages={28--29},
 }

\bib{brick}{article}{
   author={Brick, Stephen G.},
   title={Normal-convexity and equations over groups},
   journal={Invent. Math.},
   volume={94},
   date={1988},
   number={1},
   pages={81--104},
}

\bib{MR627092}{article}{
   author={Chiswell, Ian M.},
   author={Collins, Donald J.},
   author={Huebschmann, Johannes},
   title={Aspherical group presentations},
   journal={Math. Z.},
   volume={178},
   date={1981},
   number={1},
   pages={1--36},
   issn={0025-5874},
 }

\bib{MR919828}{article}{
   author={Gersten, Steve},
   title={Reducible diagrams and equations over groups},
   conference={
      title={Essays in group theory},
   },
   book={
      series={Math. Sci. Res. Inst. Publ.},
      volume={8},
      publisher={Springer, New York},
   },
   date={1987},
   pages={15--73},
}

\bib{MR888882}{article}{
   author={Gersten, Steve},
   title={Amalgamations and the Kervaire problem},
   journal={Bull. Amer. Math. Soc. (N.S.)},
   volume={17},
   date={1987},
   number={1},
   pages={105--108},
   issn={0273-0979},
}

\bib{MR0166296}{article}{
  author={Gerstenhaber, Murray},
  author={Rothaus, Oscar S.},
  title={The solution of sets of equations in groups},
  journal={Proc. Nat. Acad. Sci. U.S.A.},
  volume={48},
  date={1962},
  pages={1531--1533},
}


\bib{MR614523}{article}{
  author={Howie, James},
  title={On pairs of $2$-complexes and systems of equations over groups},
  journal={J. Reine Angew. Math.},
  volume={324},
  date={1981},
  pages={165--174},
}

\bib{huebschmann}{article}{
   author={Huebschmann, Johannes},
   title={Cohomology theory of aspherical groups and of small cancellation
   groups},
   journal={J. Pure Appl. Algebra},
   volume={14},
   date={1979},
   number={2},
   pages={137--143},
}

\bib{MR1218513}{article}{
   author={Klyachko, Anton A.},
   title={A funny property of sphere and equations over groups},
   journal={Comm. Algebra},
   volume={21},
   date={1993},
   number={7},
   pages={2555--2575},
}

\bib{MR2251364}{article}{
   author={Klyachko, Anton A.},
   title={How to generalize known results on equations over groups},
   journal={Math. Notes},
   volume={79},
   date={2006},
   number={3-4},
   pages={377--386},
}

\bib{klyachkothom2017new}{article}{
	title={New topological methods to solve equations over groups},
	author={Klyachko, Anton A.},
	author={Thom, Andreas},
	journal={Algebraic \& Geometric Topology},
	volume={17},
	number={1},
	pages={331--353},
	year={2017},
	publisher={Mathematical Sciences Publishers}
}

\bib{MR142643}{article}{
   author={Levin, Frank},
   title={Solutions of equations over groups},
   journal={Bull. Amer. Math. Soc.},
   volume={68},
   date={1962},
   pages={603--604},
}

\bib{MR813099}{article}{
   author={Metzler, Wolfgang},
   title={On the Andrews-Curtis conjecture and related problems},
   conference={
      title={Combinatorial methods in topology and algebraic geometry},
      address={Rochester, N.Y.},
      date={1982},
   },
   book={
      series={Contemp. Math.},
      volume={44},
      publisher={Amer. Math. Soc., Providence, RI},
   },
   date={1985},
   pages={35--50},
}

\bib{neumann}{article}{
   author={Neumann, Bernhard H.},
   title={Adjunction of elements to groups},
   journal={J. London Math. Soc.},
   volume={18},
   date={1943},
   pages={4--11},
}

\bib{MR2460675}{article}{
  author={Pestov, Vladimir G.},
  title={Hyperlinear and sofic groups: a brief guide},
  journal={Bull. Symbolic Logic},
  volume={14},
  date={2008},
  number={4},
  pages={449--480},
}

\bib{icm}{article}{
   author={Thom, Andreas},
   title={Finitary approximations of groups and their applications},
   conference={
      title={Proceedings of the International Congress of
      Mathematicians---Rio de Janeiro 2018. Vol. III. Invited lectures},
   },
   book={
      publisher={World Sci. Publ., Hackensack, NJ},
   },
   date={2018},
   pages={1779--1799},
}

\end{biblist}
\end{bibdiv} 

\end{document}